
\documentclass[11pt]{article}

\usepackage{graphicx, amssymb, latexsym, amsfonts, amsmath, lscape, amscd,
amsthm, color, epsfig, mathrsfs, tikz, enumerate}

\setlength{\topmargin}{-1.5cm}
\setlength{\textheight}{23cm} 
\setlength{\textwidth}{16cm}    
\setlength{\oddsidemargin}{0cm} 
\setlength{\evensidemargin}{0cm} 

\vfuzz2pt 
\hfuzz2pt 
\newtheorem{theorem}{Theorem}[section]

\newtheorem{lemma}[theorem]{Lemma}




\newcommand\DELETE[1]{}


\begin{document}

\title{{\bf Chromatic number of signed graphs  with bounded maximum degree}}
\author{
{\sc Sandip Das}$\,^a$, {\sc Soumen Nandi}$\,^a$, {\sc Soumyajit Paul}$\,^{b}$, {\sc Sagnik Sen}$^{a}$\\
\mbox{}\\
{\small $(a)$ Indian Statistical Institute, Kolkata, India}\\
{\small $(b)$ Chennai Mathematical Institute, Chennai, India}
}

\date{\today}

\maketitle

\begin{abstract}
A   signed graph $ (G, \Sigma)$ is a graph  positive 
and negative ($\Sigma $ denotes the set of negative edges).
To re-sign  a vertex $v$ of a signed graph $ (G, \Sigma)$ 
is to switch the signs of the edges incident to $v$.  
If one can obtain $ (G, \Sigma')$ by re-signing some vertices of $ (G, \Sigma)$, then
 $ (G, \Sigma) \equiv (G, \Sigma')$.
 A signed graphs  $ (G, \Sigma )$ admits an homomorphism to  
$ (H, \Lambda )$ if there is a sign preserving vertex mapping from  $(G,\Sigma')$ to $(H, \Lambda)$
for some $ (G, \Sigma) \equiv (G, \Sigma')$.
    The  signed chromatic number $\chi_{s}( (G, \Sigma))$ of the  signed graph $(G, \Sigma)$ is the minimum order (number of vertices)  of a  signed graph $(H, \Lambda)$ such that
$ (G, \Sigma)$ admits a homomorphism to   $(H, \Lambda)$. 
For a family  $ \mathcal{F}$ of signed graphs
$\chi_{s}(\mathcal{F}) = \text{max}_{(G,\Sigma) \in \mathcal{F}} \chi_{s}( (G, \Sigma))$.
 We prove $2^{\Delta/2-1} \leq \chi_s(\mathcal{G}_{\Delta}) \leq (\Delta-1)^2. 2^{(\Delta-1)} +2$ for all $\Delta \geq 3$ where $\mathcal{G}_{\Delta}$ is the family of connected signed graphs with maximum degree $\Delta$.
\end{abstract}

\noindent \textbf{Keywords:} signed graphs, homomorphism, signed chromatic number, maximum degree.

\section{Introduction and the main result}
A   \textit{signed graph} $ (G, \Sigma)$ is a graph $G$ with  \textit{positive} 
and \textit{negative} edges where $\Sigma $ denotes the set of negative edges and $G$ denotes the underlying graph. 
The set of positive edges is denoted by $\Sigma^c$. 
When the set of negative edges $\Sigma$ is known from the context, we can denote the signed graph $(G,\Sigma)$ by $(G)$.
In general, the set of vertices and the set of edges of the signed graph $(G, \Sigma)$ are
 denoted by $V(G)$ and 
$E(G)$. 
To \textit{re-sign}  a vertex $v$ of a signed graph $ (G, \Sigma)$ 
is to switch the signs of the edges incident to $v$.  
Two  signed graphs $ (G, \Sigma)$ and $ (G, \Sigma^\prime)$ are  \textit{equivalent} 
if we can obtain $ (G, \Sigma^\prime)$ by re-signing some vertices of 
$ (G, \Sigma)$. 
Given a signed graph an adjacent vertex of $v$ is called its \textit{neighbor}. 
The set of all neighbors of $v$ is denoted by $N(v)$ 
while $d(v) = |N(v)|$ is the \textit{degree} of $v$.

A \textit{2-edge-colored homomorphism} $\psi$ of  $ (G, \Sigma )$ to
$ (H, \Lambda )$ is a vertex mapping 
$ \psi : V(G) \longrightarrow V(H)$ 
such that  
   for each edge $uv$ of $(G,\Sigma)$  the images induces an edge  $\phi(u)\phi(v)$ in $(H, \Lambda)$ of the same sign as 
 $uv$.
 We write $ (G, \Sigma) \xrightarrow{2ec}  (H, \Lambda)$ whenever there exists a 
 2-edge-colored homomorphism of $ (G, \Sigma)$ to $ (H, \Lambda)$.

Given two signed graphs  $ (G, \Sigma )$ and 
$ (H, \Lambda )$, 
$ \phi : V(G) \longrightarrow V(H)$ is a  \textit{homomorphism} of 
 $(G, \Sigma)$ to 
 $(H, \Lambda)$ if 
there exists 
a $(G,\Sigma')$ equivalent to $(G, \Sigma)$  
such that  
   $\phi$ is a 2-edge-colored homomorphism of $(G,\Sigma')$ to $(H, \Lambda)$.
    We write $ (G, \Sigma) \rightarrow  (H, \Lambda)$ whenever there exists a 
  homomorphism of $ (G, \Sigma)$ to $ (H, \Lambda)$.

 The  \textit{signed chromatic number} $\chi_{s}( (G, \Sigma))$ or the \textit{2-edge-colored chromatic number}
 $\chi_{2}( (G, \Sigma))$  of the  signed graph $(G, \Sigma)$ is the minimum \textit{order} (number of vertices)  of a  signed graph $(H, \Lambda)$ such that
$ (G, \Sigma) \rightarrow  (H, \Lambda)$ or $ (G, \Sigma) \xrightarrow{2ec}  (H, \Lambda)$, respectively. 
The signed chromatic number 
$\chi_{s}(\mathcal{F})$ or the 2-edge-colored chromatic number  $\chi_{2}( \mathcal{F})$ of  a family $ \mathcal{F}$ of signed graphs is the maximum of the signed chromatic numbers 
 or the 2-edge-colored chromatic numbers, respectively,  of the signed graphs from the family $\mathcal{F}$.

The signed graphs and their switch classes have been studied since the begining of the last century~\cite{harary, Zaslavsky} while the homomorphism 
of signed graphs have been introduced and studied recently by Naserasr, Rollova and Sopena~\cite{signedhom}.
 Till now, the relation between the maximum 
degree of a signed graph and its chromatic number is not studied. We initiate it by proving the
following result adapting a probabilistic proof technique used by Kostochka, Sopena and Zhu~\cite{Kostochka97acyclicand}. 

\begin{theorem}\label{th main}
If  $\mathcal{G}_{\Delta}$ is the family of signed graphs with maximum degree at most $\Delta $, 
then $2^{\Delta/2 - 1} \leq \chi_s(\mathcal{G}_{\Delta}) \leq (\Delta-1)^2. 2^{(\Delta-1)} +2$ for all $\Delta \geq 3$.
\end{theorem}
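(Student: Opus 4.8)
The plan is to prove the two inequalities separately: the lower bound by counting signed graphs, and the upper bound by a Kostochka--Sopena--Zhu-style construction of a single random target that is universal for $\mathcal G_\Delta$. For the lower bound, fix $\Delta$ and a large integer $n$, and pick a connected $\Delta$-regular graph $G$ on $n$ vertices (a suitable circulant works, and such $G$ exist for infinitely many $n$). Its underlying graph admits $2^{|E(G)|}=2^{n\Delta/2}$ signings, and since $G$ is connected the switching action is free after quotienting by the global switch, so $G$ carries exactly $2^{n\Delta/2-n+1}$ switching classes; as switching-equivalent signed graphs have the same signed chromatic number, it suffices to show that these classes do not all map to one small target. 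If $(H,\Lambda)$ has order $k$, a switching class with underlying graph $G$ admitting a homomorphism to $(H,\Lambda)$ is determined by the underlying vertex map $V(G)\to V(H)$ witnessing it, because that map forces the signs of all edges of $G$, hence the class. So at most $k^n$ classes map to a fixed target of order $k$, and summing over the at most $3^{k^2}$ signed graphs of order at most $k$, at most $3^{k^2}k^n$ of our classes have signed chromatic number at most $k$. Since
\[
\log_2\!\big(2^{n\Delta/2-n+1}\big)-\log_2\!\big(3^{k^2}k^n\big)=n\Big(\tfrac{\Delta}{2}-1-\log_2 k\Big)+1-k^2\log_2 3 ,
\]
this is positive for $n$ large whenever $k<2^{\Delta/2-1}$, so some member of $\mathcal G_\Delta$ then has signed chromatic number greater than $k$; hence $\chi_s(\mathcal G_\Delta)\ge 2^{\Delta/2-1}$.

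For the upper bound the aim is to build one signed graph $(H,\Lambda)$ of order $N=(\Delta-1)^2 2^{\Delta-1}+2$ receiving a homomorphism from every member of $\mathcal G_\Delta$. Construct $H$ randomly: take a set $X$ of $N-2$ vertices, put a positive or a negative edge between each pair of $X$ independently with probability $\tfrac12$, and add two further vertices $p,q$ joined suitably to everything. By a union bound one checks that with positive probability $(H,\Lambda)$ has the \emph{realisation property}: for every set of at most $\Delta-1$ vertices $w_1,\dots,w_t$ of $X$ and every sign pattern $(\varepsilon_1,\dots,\varepsilon_t)$ there are $\Omega(N2^{-\Delta})$ vertices $v\in X$ with $vw_i$ an edge of sign $\varepsilon_i$ for all $i$; here the number of such witnesses is a sum of $N-t$ independent indicators of mean $2^{-t}$, so a Chernoff estimate together with a union bound over the $\sum_{t\le\Delta-1}\binom Nt2^t$ configurations closes with the stated value of $N$.

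Given such an $(H,\Lambda)$, let $(G,\Sigma)$ be connected with $\Delta(G)\le\Delta$. If $G$ is not $\Delta$-regular it is $(\Delta-1)$-degenerate; order $V(G)$ as $u_1,\dots,u_n$ with each $u_i$ having at most $\Delta-1$ earlier neighbours, and colour greedily into $X$: to colour $u_i$, look at the images and current signs of its earlier neighbours, re-sign $u_i$ if needed (this flips all those signs simultaneously), and invoke the realisation property to pick an image. This yields a switching $(G,\Sigma')\equiv(G,\Sigma)$ carrying a $2$-edge-coloured homomorphism to $(H,\Lambda)$, i.e.\ $(G,\Sigma)\to(H,\Lambda)$. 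If $G$ is $\Delta$-regular, pick an edge $v^{\ast}v^{\ast\ast}$; then $G-v^{\ast}v^{\ast\ast}$ is $(\Delta-1)$-degenerate (every proper induced subgraph of a connected $\Delta$-regular graph has minimum degree at most $\Delta-1$), so colour it into $X$ as above and then insert $v^{\ast}$ and $v^{\ast\ast}$ simultaneously: each still has at most $\Delta-1$ already-coloured neighbours in $X$, so the realisation property gives large admissible image-sets $A,B\subseteq X$ for them, and it remains to choose $a\in A$, $b\in B$ with $ab$ an edge of the sign demanded by $v^{\ast}v^{\ast\ast}$ (re-signing $v^{\ast}$ or $v^{\ast\ast}$ toggles the relevant constraint); the two spare vertices $p,q$ together with a slight strengthening of the realisation property provide the room to do this. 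The routine ingredients are the lower-bound count and the basic greedy colouring; the main obstacle is the probabilistic part — making the union bound close with the explicit value $(\Delta-1)^2 2^{\Delta-1}+2$ uniformly in $\Delta\ge3$, and equipping $(H,\Lambda)$ with exactly the extra structure needed to re-insert the last edge of a $\Delta$-regular graph while keeping the effect of re-signing (which changes an entire prescribed pattern at once, not individual signs) under control.
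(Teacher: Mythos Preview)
Your approach is essentially the paper's: a counting lower bound (the paper routes it through $\chi_2$ and the inequality $\chi_2\le 2\chi_s$, but your direct switching-class count is the same idea), and an upper bound via a random signed complete graph with a realisation-type property followed by greedy extension along a $(\Delta-1)$-degeneracy order. The one place your outline is more elaborate than needed is the $\Delta$-regular endgame: the paper simply builds the homomorphism of $G-uv$ into the random target and then \emph{appends two fresh vertices to the target} to serve as the images of $u$ and $v$, choosing all their incident signs by hand, which explains the ``$+2$'' without any auxiliary vertices $p,q$ inside the random construction or any simultaneous-insertion argument.
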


Note that both lower and upper bounds are exponential in $\Delta$.  

\section{Proof of Theorem~\ref{th main}}
First we will prove the lower bound.

Let $(G, \Sigma)$ be a signed graph. 
Let $uv$ be an positive edge and $xy$ be a negative edge. Then $u$ is
a $+$-neighbor of $v$ and $x$ is a $-$-neighbor of $y$. The set of all $+$-neighbors and $-$-neighbors of a
vertex $v$ is denoted by $N^+(v)$ and $N^-(v)$, respectively.
Let $\vec{a} = (a_1, a_2, ..., a_j)$ be a \textit{$j$-vector} such that 
$a_i \in \{+,-\}$ where $i \in \{1,2,...,j\}$. 
Let $J = (v_1, v_2, ..., v_j)$ be a \textit{$j$-tuple} (without repetition) of  vertices  from $G$. Then we define the set 
$N^{\vec{a}}(J) = \{v \in V | v \in N^{a_i}(v_i) \text{ \textit{for all} } 1 \leq i \leq j\} $. 
Finally, we say that $G$ has property $P_{t-1}$ if for each $j$-vector $\vec{a}$ and each $j$-tuple $J$ we have 
$|N^{\vec{a}}(J)| \geq \frac{1+(t-j)(t-2)}{2}$ where $j \in \{0,1,...,t-1\}$.

\begin{lemma}\label{key-lemma}
There exists a signed  complete graph  with property $P_{t-1}$ on 
$c =  t(t-1) .2^{t}$ vertices. 
\end{lemma}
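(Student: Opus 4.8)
The plan is to use the probabilistic method, following the technique of Kostochka, Sopena and Zhu. I would take the random signed complete graph $(K_c,\Sigma)$ on vertex set $\{1,\dots,c\}$ with $c=t(t-1)\cdot 2^{t}$, in which the sign of each of the $\binom{c}{2}$ edges is $+$ or $-$ independently and uniformly at random. It then suffices to show that with positive probability $(K_c,\Sigma)$ has property $P_{t-1}$.

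First I would analyse one constraint. Fix $j\in\{1,\dots,t-1\}$ (the case $j=0$ is vacuous, as $N^{\vec a}(J)=V$ and $c\ge\tfrac{1+t(t-2)}{2}$ trivially), a $j$-vector $\vec a=(a_1,\dots,a_j)$, and a $j$-tuple $J=(v_1,\dots,v_j)$ of distinct vertices. For a vertex $v\notin\{v_1,\dots,v_j\}$ the event $v\in N^{\vec a}(J)$ depends only on the $j$ distinct edges $vv_1,\dots,vv_j$ and occurs exactly when $vv_i$ has sign $a_i$ for all $i$; so it has probability $2^{-j}$, and for two distinct such vertices the relevant edges are disjoint, hence these events are mutually independent. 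Therefore $|N^{\vec a}(J)|$ is a binomial random variable with parameters $c-j$ and $2^{-j}$, of mean $\mu_j:=(c-j)2^{-j}=t(t-1)2^{\,t-j}-j2^{-j}\ge t(t-1)2^{\,t-1-j}$, which is at least $t(t-1)$ for all $j\le t-1$ and exponentially larger when $j$ is small.

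Next I would bound the probability of the bad event $|N^{\vec a}(J)|<a_j$ where $a_j:=\tfrac{1+(t-j)(t-2)}{2}$. The crucial point is that $a_j$ grows only linearly in $t-j$, whereas $\mu_j$ grows exponentially in $t-j$, so $a_j$ lies far below $\mu_j$; feeding this into the sharp multiplicative Chernoff lower-tail bound $\Pr[X\le a]\le e^{\,a-\mu}(\mu/a)^{a}$ gives $\Pr\big[\,|N^{\vec a}(J)|<a_j\,\big]\le\exp\!\big(-\mu_j+a_j\ln(e\mu_j/a_j)\big)$.

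Finally I would take a union bound. For a fixed $j$ there are $2^{j}$ vectors and at most $c^{j}$ tuples, hence at most $(2c)^{j}$ bad events, so the probability that some constraint fails is at most $\sum_{j=1}^{t-1}(2c)^{j}\exp\!\big(-\mu_j+a_j\ln(e\mu_j/a_j)\big)$. Using the explicit value $c=t(t-1)2^{t}$, one checks that each summand is less than $1/t$ — the binding case is $j=t-1$, where $\mu_{t-1}\approx 2t(t-1)$, $a_{t-1}=\tfrac{t-1}{2}$, and $(2c)^{t-1}=\exp\big((t^{2}-1)\ln 2+O(t\log t)\big)$, so the summand is about $\exp\big(-1.3\,t^{2}+O(t\log t)\big)$ — whence the whole sum is below $1$ and the lemma follows. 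The step I expect to be the main obstacle is exactly this calibration: there are up to roughly $2^{t^{2}}$ bad events, so the cruder bound $e^{-\delta^{2}\mu/2}$ is uncomfortably tight, and one must use that the thresholds $a_j$ remain small in absolute value (linear, not exponential) while the binomial means are huge — which is precisely what the choice $c=t(t-1)2^{t}$ secures; for the finitely many small $t$ where the $O(\cdot)$ terms are not yet negligible I would verify the union bound directly via the exact binomial tail $(1-2^{-j})^{c-j}$ and similar expressions.
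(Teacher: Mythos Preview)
Your approach is essentially the same as the paper's: both take the uniformly random signing of $K_c$, observe that $|N^{\vec a}(J)|$ is $\mathrm{Bin}(c-j,2^{-j})$, bound its lower tail below the threshold $a_j=\tfrac{1+(t-j)(t-2)}{2}$, and finish by a union bound over all pairs $(J,\vec a)$, with $j=t-1$ the binding case. The only difference is cosmetic---you invoke the multiplicative Chernoff bound, whereas the paper estimates the binomial tail directly as $\sum_{i<a_j}\binom{c-j}{i}2^{-ij}(1-2^{-j})^{c-i-j}\le e^{-c2^{-j}}c^{a_j}$ and then controls the resulting sum by a ratio test on consecutive summands, which lets the inequalities go through uniformly in $t$ and avoids your separate hand-check for small $t$.
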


\begin{proof}
Let $(C, \Pi)$ be a random signed graph with underlying complete graph. 
Let $u,v$ be two vertices of $(C, \Pi)$ and the events $u \in N^{a}(v)$
for $a \in \{+,-\}$ are equiprobable and independent  with probability $\frac{1}{2}$. 
We will show that the probability of $C$ not having property $P_{t-1}$ is strictly less than 1 when  
$|C| = c = t(t-1) .2^{t}$. Let $P(J,\vec{a})$ denote the probability of the event 
$|N^{\vec{a}}(J)| < \frac{(t-j)(t-2)+1}{2}$ where $J$ is a $j$-tuple of $C$ and $\vec{a}$ is a $j$-vector for some $j \in \{0,1,...,t-1\}$. Call such an event a \textit{bad event}. Thus,

\begin{equation} \label{eq1}
\begin{split}
P(J, \vec{a})  \leq \sum\limits_{i=0}^{\frac{(t-j)(t-2)-1}{2}} {c-j \choose i} 2^{-ij} (1 - 2^{-j})^{c - i - j} & < (1 - 2^{-j})^c \sum\limits_{i=0}^{\frac{(t-j)(t-2)-1}{2}} \frac{c^i}{i!} (1 - 2^{-j})^{- i - j} 2^{-ij} \\
 & < 2 e^{-c2^{-j}} \sum\limits_{i=0}^{\frac{(t-j)(t-2)-1}{2}} c^i  < e^{-c2^{-j}} c^{\frac{(t-j)(t-2)+1}{2}}
\end{split}
\end{equation}

Let $P(B)$ denote the probability of the occurrence of at least one bad event. 
To prove this lemma it is enough to  show that $P(B) < 1$. Let $T^j$ denote the set of all $j$-tuples and $W^j$ denote the set of all $j$-vectors.  Then 

\begin{equation} \label{eq2}
\begin{split}
P(B) & = \sum_{j=0}^{t - 1} \sum_{J \in T^j} \sum_{\vec{a} \in W^j} P(J, \vec{a})  < \sum\limits_{j=0}^{t - 1} {c \choose j} 2^{j} e^{-c2^{-j}} c^{\frac{(t-j)(t-2)+1}{2}} \\
 & < 2\sum\limits_{j=0}^{t - 1} \frac{c^j}{j!} 2^{j-1} e^{-c2^{-j}} c^{\frac{(t-j)(t-2)+1}{2}}  <  2\sum\limits_{j=0}^{t - 1} e^{-c2^{-j}} c^{\frac{(t-j)(t-2)+1}{2}+j}
\end{split}
\end{equation}

Consider the function $f(j) =  2e^{-c2^{-j}} c^{\frac{(t-j)(t-2)+1}{2} + j}$. Observe that $f(j)$ is the $j^{th}$ 
summand  of the last sum from equation~(\ref{eq2}). 
Now
\begin{equation} \label{eq3}
\begin{split}
 \frac{f(j + 1)}{f(j)} & =  \frac{e^{c2^{-j-1}}}{c^{\frac{t-4}{2}}}  >   \frac{e^{c2^{-t+1}}}{c^{\frac{t-4}{2}}}  =   \frac{e^{2t(t-1)}}{(t(t-1))^{\frac{t-4}{2}}2^{\frac{t(t-4)}{2}}}  >   \left(\frac{e^{2(t-1)}}{t(t-1)} \cdot \frac{e^{2(t-1)}}{2^{t-4}}\right)^{\frac{t}{2}}  >   2^{10} 
\end{split}
\end{equation}

\medskip

The above relation implies the following

\begin{equation} \label{eq6}
\begin{split}
 P(B) & \leq  \sum\limits_{j=0}^{t - 1} f(j) < \left(1+\frac{1}{2^{10}-1}\right) f(t-1)  < 2(1.001) \left( \frac{t^3(t-1)^3}{e^{2t-1}}\cdot \frac{2^{3t}}{e^{2t-1}}  \right)^{\frac{t-1}{2}}  <   1 
\end{split}
\end{equation}

This completes the proof.
\end{proof}

\begin{lemma}\label{key-lemma2}
Let $(C, \Pi)$ be a signed graph 
 with property 
$P_{\Delta-1}$ and $(G, \Sigma)$ be a connected signed graph with 
maximum degree $\Delta$ and degeneracy $(\Delta-1)$. Then $(G, \Sigma)$ admits a homomorphism to $(C, \Pi)$. 
\end{lemma}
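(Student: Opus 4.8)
The plan is to embed $(G,\Sigma)$ vertex by vertex along a degeneracy ordering while building the target re-signing on the fly. Write $t=\Delta$. Since $(G,\Sigma)$ has degeneracy $\Delta-1$, fix an ordering $v_1,\dots,v_n$ of $V(G)$ in which every $v_i$ has at most $\Delta-1$ neighbours among $v_1,\dots,v_{i-1}$; call these its \emph{back-neighbours} and the remaining neighbours its \emph{forward-neighbours}. Producing a homomorphism $(G,\Sigma)\to(C,\Pi)$ amounts to producing a re-signing $\rho\colon V(G)\to\{0,1\}$ — under which an edge $uv$ gets \emph{effective sign} equal to its $\Sigma$-sign flipped by $\rho(u)+\rho(v)$ — together with a map $\psi\colon V(G)\to V(C)$ preserving effective signs. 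First I would build $\rho$ and $\psi$ greedily, treating $v_1,\dots,v_n$ in turn and keeping the inductive invariant that after stage $i$ the pair $(\rho,\psi)$ restricted to $\{v_1,\dots,v_i\}$ is a sign-preserving homomorphism of the induced signed subgraph into $(C,\Pi)$, obtained by ``safe'' choices in the sense below.

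At stage $i$ the back-neighbours $u_1,\dots,u_p$ of $v_i$ (with $p\le\Delta-1$) are already placed. Once a tentative value $b\in\{0,1\}$ for $\rho(v_i)$ is fixed, extending the partial homomorphism forces $\psi(v_i)\in\bigcap_{k}N^{\epsilon_k}(\psi(u_k))$, where $\epsilon_k$ is the effective sign of $v_iu_k$. A short check shows that two back-neighbours with the same image necessarily have equal $\epsilon_k$ (otherwise an earlier stage would already have separated them), so this intersection equals $N^{\vec a}(J)$ for a repetition-free tuple $J$ of length $p'\le p\le t-1$, and by property $P_{t-1}$ it has size at least $\tfrac{1+(t-p')(t-2)}{2}\ge\tfrac{1+(t-p)(t-2)}{2}$. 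Not every element of it is a ``safe'' choice, however: $\psi(v_i)$ must also differ from $\psi(v_\ell)$ for each earlier vertex $v_\ell$ that is \emph{dangerous}, meaning $v_\ell$ has a common neighbour $v_m$ with $m>i$ such that the edges $v_mv_i$ and $v_mv_\ell$ would receive different effective signs — for precisely then could the back-neighbourhood of $v_m$ later fail to have any legal common image in $(C,\Pi)$, no matter how $\rho(v_m)$ is chosen.

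The crux is that whether a pair $(v_\ell,v_m)$ is dangerous flips when $b$ flips: the effective signs of $v_mv_i$ and $v_mv_\ell$ both depend on the still-unchosen bit $\rho(v_m)$, but their \emph{difference} does not, and that difference is toggled by $\rho(v_i)=b$ alone. Hence each such pair is dangerous for exactly one value $b\in\{0,1\}$. Writing $D(b)$ for the set of dangerous earlier vertices when $\rho(v_i)=b$, this yields $|D(0)|+|D(1)|\le\#\{(v_\ell,v_m)\}\le(\Delta-p)(\Delta-2)$: indeed $v_i$ has at most $\Delta-p$ forward-neighbours, and each forward-neighbour $v_m$ contributes at most $\Delta-2$ candidates $v_\ell$, since these lie among the at most $\Delta-1$ back-neighbours of $v_m$ other than $v_i$ (degeneracy used once more, at $v_m$). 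So I would choose $\rho(v_i)$ to minimise $|D(\rho(v_i))|$, giving $|D(\rho(v_i))|\le\tfrac12(\Delta-p)(\Delta-2)$, and then pick $\psi(v_i)$ anywhere in
\[
\Big(\bigcap_{k}N^{\epsilon_k}(\psi(u_k))\Big)\ \setminus\ \{\,\psi(v_\ell): v_\ell\in D(\rho(v_i))\,\}.
\]
This set is non-empty: both $|N^{\vec a}(J)|$ and $|D(\rho(v_i))|$ are integers, and $\lceil\tfrac{1+x}{2}\rceil-\lfloor\tfrac{x}{2}\rfloor\ge1$ for every integer $x\ge0$, applied with $x=(t-p)(t-2)$. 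The chosen $\psi(v_i)$ lies in each $N^{\epsilon_k}(\psi(u_k))$ and is therefore automatically distinct from every $\psi(u_k)$, so the invariant survives; since all future clashes have been averted, the greedy procedure runs to the end, and the final $(\rho,\psi)$ is a homomorphism $(G,\Sigma)\to(C,\Pi)$.

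The hard part — and the reason property $P_{t-1}$ carries the factor $\tfrac12$ — is exactly closing the gap between the $(\Delta-p)(\Delta-2)$ images a naive greedy would need to dodge and the only $\approx\tfrac12(\Delta-p)(\Delta-2)$ of slack that $P_{t-1}$ provides; the single re-signing bit at $v_i$ is what splits the dangerous pairs evenly over its two values, and this extra freedom — absent for plain $2$-edge-coloured homomorphisms — is what makes the estimate close. A more routine difficulty is phrasing the invariant precisely, in particular re-checking at every stage that coinciding back-neighbours carry equal effective signs, so that property $P_{t-1}$, stated for repetition-free tuples, truly applies.
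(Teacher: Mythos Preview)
Your proof is correct and follows the same high-level strategy as the paper's: a greedy extension along a $(\Delta-1)$-degeneracy ordering, exploiting the re-signing bit at the current vertex to recover the factor $\tfrac12$ that property $P_{\Delta-1}$ is calibrated to. The bookkeeping differs, however. The paper maintains the \emph{stronger} invariant that all already-placed neighbours of every future vertex have \emph{distinct} images; at each step it then pools the candidate images coming from \emph{both} values of the re-signing bit into a single set $D$ of size at least $1+(\Delta-p)(\Delta-2)$ (the two common neighbourhoods $N^{\vec a}(J)$ and $N^{-\vec a}(J)$ being disjoint once $p\ge 1$), and simply avoids the full image set $f(B)$ of earlier vertices sharing a forward neighbour with the current one, of size at most $(\Delta-p)(\Delta-2)$. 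You instead keep only the weaker invariant that coinciding back-images carry equal effective sign, commit to a single value of the re-signing bit chosen to halve the set of vertices you must dodge, and work inside one common neighbourhood of size at least $\tfrac12\bigl(1+(\Delta-p)(\Delta-2)\bigr)$. The paper's version is shorter and sidesteps the consistency check you flag at the end; your version makes more transparent exactly where the single re-signing bit buys the factor two in the count.
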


\begin{proof}
Let $v_1, v_2, ..., v_k$ be the vertices of $(G,\Sigma)$ in such a way that each vertex $v_j$ has at most $(\Delta-1)$ neighbors with lower indices.
Let $(G_j, \Sigma_j)$ be the  signed graph induced by the vertices $v_1, v_2, ..., v_l$ from $(G,\Sigma)$ for each $j \in \{1,2,...,k\}$. 
Now we will inductively construct a  homomorphism $f: (G,\Sigma) \rightarrow (C,\Pi)$ with the following properties:

\begin{itemize}
\item[$(i)$] The partial mapping $f(v_1), f(v_2), ..., f(v_j)$ is a homomorphism of 
 $(G_j, \Sigma_j)$ to $(C, \Pi)$ for all $j \in \{1,2,...,k\}$.

\item[$(ii)$]  For each $i > j$, all the neighbors of $v_i$ with indices less than or equal to $j$ has different images with respect to the mapping $f$.  
\end{itemize} 

For $j=1$ take any  partial mapping $f(v_1)$. 
Suppose that the function $f$ satisfies the above properties for all $i \leq j$ for some  fixed $j \in \{1,2,...,k-1\}$. 
Let $A$ be the set of neighbors of $v_{j+1}$   with  indices greater than $j+1$
and $B$ be the set of vertices   with indices at most $j$ and with at least one neighbor in $A$. 
 Note that  $|B| = (\Delta-2)|A|$. 
 Let $D$ be the set of possible  options for $f(v_{j+1})$ such that the partial mapping 
 is a homomorphism of $(G_{j+1},\Sigma_{j+1})$ to $(C,\Pi)$. As the partial mapping can be 
 extended also by re-signing the vertex $v_{j+1}$. Thus 
  $|D| > |B|$.
 Choose any vertex from $D \setminus B$ as the image $f(v_{t+1})$. 
 Note that this partial mapping satisfies the required conditions. 
\end{proof}

\medskip

\noindent \textit{Proof of Theorem~\ref{th main}.}  
The lower bound proof of oriented chromatic number for $\mathcal{G}_{\Delta}$ by Kostochka, Sopena and Zhu~\cite{Kostochka97acyclicand}
can be easily modified to obtain the lower bound   $2^{\Delta/2} \leq \chi_2(\mathcal{G}_{\Delta})$. 
Our lower bound follows from the relation  
$\chi_2((G,\Sigma)) \leq 2 \cdot \chi_s((G,\Sigma))$ for any signed graph $(G,\Sigma)$~\cite{signedhom}.

\medskip
For proving the upper bound, if $(G, \Sigma)$ is not $\Delta$-regular, but is a connected signed graph with maximum degree $\Delta$, then $(G, \Sigma)$ is $\Delta-1$ degenerated
then we are done by Lemma~\ref{key-lemma} and~\ref{key-lemma2}. 
Otherwise,  $(G, \Sigma)$ is a $\Delta$-regular connected signed graph. 
Delete one edge $uv$ from $(G, \Sigma)$ to obtain a connected signed graph which has maximum
degree $\Delta$ and degeneracy $\Delta-1$. This new signed graph admits a  homomorphism $g$ to a signed graph $(C, \Pi)$   with property 
$P_{\Delta-1}$. Now modify this homomorphism by changing the 
images $g(u)$ and $g(v)$ by adding two new vertices $g(u), g(v)$   to $C$ and choosing signs of the edges $uv$ and the edges between $\{u,v\}$ and the vertices of $C$ in such a way that our newly obtained map is also a homomorphism. 
\hfill $ \square$

   \section{Conclusive remarks}
  Klostermeyer and MacGillivray~\cite{push} studied pushable chromatic number of oriented graphs. 
  The exact same upper and lower bounds proved in Theorem~\ref{th main} can be proved for pushable chromatic number of connected graphs with bounded maximum degree in a similar way.

\bibliographystyle{plain}
\bibliography{NSS14}

\end{document}